\def\UU{{\mathcal S}}
\def\DD{{\mathcal D}}
\def\bP{{\bf P}}
\def\R{{\mathbb R}}
\def\N{{\mathbb N}}
\def\zero{{\bf 0}}
\def\ee{{\bf e}}
\def\xx{{\bf x}}
\def\yy{{\bf y}}
\def\zz{{\bf z}}
\def\aa{{\bf a}}
\def\bb{{\bf b}}
\def\cc{{\bf c}}
\def\mm{{\bf m}}
\def\one{{\bf 1}}
\def\supp{\mathop{\rm supp}}
\def\myor{\mathop{\vee}}
\newtheorem*{prob}{Problem}
\newtheorem*{conj}{Conjecture}
\newtheorem{claim}{Claim}
\newtheorem{la}{Lemma}
\newtheorem{thm}{Theorem}
\newtheorem{cor}[thm]{Corollary}
\title{Multiply union families in $\N^n$}
\author[P.~Frankl]{
Peter Frankl}
\address{
Alfr\'ed R\'enyi Institute of Mathematics,
H-1364 Budapest, P.O.Box 127, Hungary
}
\email{peter.frankl@gmail.com}
\author[M.~Shinohara]{Masashi Shinohara}
\address{Faculty of Education, Shiga University, 2-5-1 Hiratsu, Shiga 520-0862, Japan}
\email{shino@edu.shiga-u.ac.jp}
\author[N.~Tokushige]{Norihide Tokushige}
\thanks{The third author was supported by JSPS KAKENHI 25287031.}
\address{
College of Education, Ryukyu University\\
Nishihara, Okinawa 903-0213, Japan
}
\email{hide@edu.u-ryukyu.ac.jp}
\date{\today}
\begin{document}
\begin{abstract}
Let $A\subset \N^{n}$ be an $r$-wise $s$-union family, that is, 
a family of sequences with $n$ components of non-negative integers
such that for any $r$ sequences in $A$ the total sum of the maximum of 
each component in those sequences is at most $s$.
We determine the maximum size of $A$ and its unique extremal configuration
provided (i) $n$ is sufficiently large for fixed $r$ and $s$, or
(ii) $n=r+1$.
\end{abstract}

\maketitle


\section{Introduction}
Let $\N:=\{0,1,2,\ldots\}$ denote the set of non-negative integers,
and let $[n]:=\{1,2,\ldots,n\}$.
Intersecting families in $2^{[n]}$ or $\{0,1\}^n$ are one of the main objects in
extremal set theory. The equivalent dual form of an intersecting family
is a union family, which is the subject of this paper.
In \cite{FT} Frankl and Tokushige proposed to consider such problems not 
only in $\{0,1\}^n$ but also in $[q]^n$. They determined
the maximum size of 2-wise $s$-union families (i) in $[q]^n$ for $n>n_0(q,s)$,
and (ii) in $\N^3$ for all $s$ (the definitions will be given shortly). 
In this paper we extend their results
and determine the maximum size and structure of $r$-wise $s$-union 
families in $\N^{n}$ for the following two cases: (i) $n\geq n_0(r,s)$,
and (ii) $n=r+1$.
Much research has been done for the case of families in $\{0,1\}^n$,
and there are many challenging open problems.
The interested reader is referred to \cite{F1,F2,FT0,T1,T2}.

For a vector $\xx\in\R^n$, we write $x_i$ or $(\xx)_i$
for the $i$th component, so $\xx=(x_1,x_2,\ldots,x_n)$.
Define the {\sl weight} of $\aa\in\N^n$ by
\[
 |\aa|:=\sum_{i=1}^n a_i.
\]
For a finite number of vectors $\aa,\bb,\ldots,\zz\in \N^n$ define the 
join $\aa\myor \bb\myor\cdots\myor\zz$ by
\[
 (\aa\myor \bb\myor\cdots\myor\zz)_i:=\max\{a_i,b_i,\ldots,z_i\},
\]
and we say that $A\subset \N^n$ is $r$-wise $s$-{\sl union} if
\[
 |\aa_1\myor \aa_2\myor\cdots\myor\aa_r|\leq s\text{ for all }
\aa_1,\aa_2,\ldots,\aa_r\in A.
\]
In this paper we address the following problem.

\begin{prob}
For given $n,r$ and $s$, determine the maximum size $|A|$ of $r$-wise
$s$-union families $A\subset\N^n$.
\end{prob}

To describe candidates $A$ that give the maximum size to the above problem,
we need some more definitions.
Let us introduce a partial order $\prec$ in $\R^n$. 
For $\aa, \bb\in \R^n$ we let $\aa\prec \bb$ iff
$a_i\leq b_i$ for all $1\leq i\leq n$.
Then we define a {\sl down set} for $\aa\in \N^n$ by
\[
 \DD(\aa):=\{\cc\in \N^n: \cc\prec \aa\},
\]
and for $A\subset \N^n$ let 
\[
 \DD(A):=\bigcup_{\aa\in A}\DD(\aa).
\]
We also introduce $\UU(\aa,d)$, which can be viewed as a part of sphere 
centered at $\aa\in\N^n$ with radius $d\in\N$, defined by
\[
\UU(\aa,d):=\{\aa+{\boldsymbol \epsilon}\in \N^n: 
{\boldsymbol\epsilon}\in \N^n,\, |{\boldsymbol\epsilon}|=d\}.
\]
We say that $\aa\in \N^n$ is a {\sl balanced partition}, 
if all $a_i$'s are as close to each other as possible, more precisely,
$|a_i-a_j|\leq 1$ for all $i,j$.
Let $\one:=(1,1,\ldots,1)\in \N^n$. 

For $r,s,n,d\in\N$ with $0\leq d\leq\lfloor\frac sr\rfloor$ 
and $\aa\in\N^n$ with $|\aa|=s-rd$ let us define a family $K$ by
\begin{equation}\label{def:K}
K= K(r,n,\aa,d):=
\bigcup_{i=0}^{\lfloor\frac du\rfloor}\DD(\UU(\aa+i\one,d-ui)),
\end{equation}
where $u=n-r+1$. 
This is the candidate family. Intuitively $K$ is a union of balls, 
and the corresponding centers and radii are chosen so that $K$ is 
$r$-wise $s$-union as we will see in Claim~\ref{claim1} in the next section.

\begin{conj}
Let $r\geq 2$ and $s$ be positive integers.
If $A\subset\N^{n}$ is $r$-wise $s$-union, then 
\[
 |A|\leq\max_{0\leq d\leq\lfloor\frac sr\rfloor}
\left| K(r,n,\aa,d)\right|,
\]
where $\aa\in\N^{n}$ is a balanced partition with $|\aa|=s-rd$.
Moreover if equality holds, then $A=K(r,n,\aa,d)$ for some 
$0\leq d\leq\lfloor\frac sr\rfloor$.
\end{conj}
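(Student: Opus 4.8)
The plan is to first reduce to \emph{down-sets}. If $A$ is $r$-wise $s$-union then so is $\DD(A)$: given $\cc_1,\dots,\cc_r\in\DD(A)$ with $\cc_k\prec\aa_k\in A$, we have $\cc_1\myor\cdots\myor\cc_r\prec\aa_1\myor\cdots\myor\aa_r$, so its weight is at most $s$; and $|\DD(A)|\ge|A|$ with equality iff $A=\DD(A)$. Hence I may assume $A=\DD(A)$, in which case it suffices to verify the union condition on the maximal elements of $A$. Two features of down-sets will drive everything. First, the $0/1$-elements $A\cap\{0,1\}^n$ become, under the identification of a $0/1$-vector with its support, an $r$-wise $s$-union family of subsets of $[n]$ (any $r$ supports have union of size at most $s$), for which the Boolean extremal results of \cite{FT,F1,F2} apply. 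Second, slicing on one coordinate produces a nested chain of smaller down-sets obeying the same type of condition.

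Next I would pin down the optimal $d$. Writing $\DD(\UU(\aa,d))=\{\cc:\sum_{i}\max(0,c_i-a_i)\le d\}$ (pad the deficiency vector up to weight $d$), a direct count with $\aa$ a balanced $0/1$-vector of weight $s-rd$ gives $|K(r,n,\aa,d)|=\frac{2^{\,s-rd}}{d!}\,n^{d}+O(n^{d-1})$, the top stratum coming from taking all of $\supp(\aa)$ together with any $d$ further coordinates set to $1$. Thus for $n\ge n_0(r,s)$ the exponent forces the maximiser to be $d^*=\lfloor s/r\rfloor$, a \emph{single} ball (the multiple-ball terms $i\ge1$ in \eqref{def:K} are vacuous since $d^*<u=n-r+1$). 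By contrast, when $n=r+1$ we have $u=2$ and $d$ may exceed $u$, so the genuine union-of-balls shape of $K$ is tested; there I expect to argue directly in the $r+1$ coordinates rather than asymptotically.

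The heart is the upper bound $|A|\le|K(r,n,\aa,d^*)|$. For large $n$ I would induct on $n$ by slicing on the last coordinate: setting $A_j=\{\aa'\in\N^{n-1}:(\aa',j)\in A\}$ gives a nested chain $A_0\supseteq A_1\supseteq\cdots$ of down-sets in $\N^{n-1}$, where each $A_j$ is $r$-wise $(s-j)$-union and, crucially, the cross conditions between distinct slices mirror exactly the way the ball $K$ splits into slices (the $j$-slice of a radius-$d^*$ ball being a radius-$(d^*-j)$ ball). Feeding the inductive bound into $|A|=\sum_j|A_j|$, and controlling the top stratum through the Boolean $r$-wise-union bound, should close the estimate. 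For $n=r+1$ the same slicing, now in only two effective coordinates per step, can be pushed by hand, and is where the union-of-balls configuration is matched centre by centre.

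Finally, uniqueness is obtained by tracing equality back through the reduction and the induction: equality in the down-closure step forces $A=\DD(A)$; equality in the Boolean bound pins the $0/1$-part to the support-fixed extremal family; and equality in every slice forces each $A_j$ to be the corresponding ball, whence $A=K$. The main obstacle, I expect, is not the leading asymptotics but the \emph{exact}, non-asymptotic accounting: one must genuinely exploit the cross-slice constraints, since summing the slice-wise optima independently overshoots $|K|$, and one must show they force the balanced, support-nested structure in the equality case. The union-of-balls regime at $n=r+1$ compounds this, requiring a multi-centre configuration to be matched rather than a single ball; proving that the cross conditions exactly compensate is the delicate part.
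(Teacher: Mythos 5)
Your proposal does not close the argument, and the gap is exactly where you flag it yourself. Note first that the statement you are proving is stated in the paper as a \emph{conjecture}: the paper establishes it only in two regimes, $n>n_0(r,s)$ (Theorem~\ref{thm1}) and $n=r+1$ (Corollary~\ref{cor}), and it is open in general. Your slicing plan, if completed, would prove the full conjecture, but the two steps carrying all the content are left as hopes. First, the upper bound: as you yourself observe, each slice $A_j=\{\aa':(\aa',j)\in A\}$ is $r$-wise $(s-j)$-union, but summing the slice-wise optima overshoots. Concretely, writing $s=dr+p$ with $1\le p<r$, the slices $j=0,1,\dots,p$ each admit families of size on the order of $(2^{p-j}/d!)\,n^{d}$, so the naive sum gives roughly $\bigl((2^{p+1}-1)/d!\bigr)n^{d}$ against the target $(2^{p}/d!)\,n^{d}$; thus the cross-slice constraints are needed even to get the \emph{leading coefficient} right, not merely the exact count. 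You assert that these constraints ``mirror exactly the way the ball $K$ splits into slices'' and that feeding them in ``should close the estimate,'' but that mirroring is a property of the extremal configuration, not of an arbitrary $r$-wise $s$-union family; no mechanism is offered that forces an arbitrary $A$ to respect it. Second, the uniqueness analysis is likewise only described, not performed. An argument whose central steps are announced as ``the delicate part'' and left undone is a program, not a proof.

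It is worth seeing how the paper circumvents these difficulties in the two cases it handles, since nothing in your sketch substitutes for either tool. For large $n$ the paper does not induct on $n$ at all: it reduces to a down-set, fixes a maximal join $\bb$ with $|\bb|=u(d+1)$, stratifies $A$ by the excess $|\aa\setminus\bb|$, and controls the critical stratum $A_d$ via the Erd\H{o}s matching theorem combined with a ``richness'' argument; Claim~\ref{q<=p} caps the number of rich $\bb'\prec\bb$ at $2^{p}$, which is precisely how the coefficient $2^{p}$ (rather than $2^{p+1}-1$) is achieved. For $n=r+1$ the paper shows directly that an extremal $A$ lies in the set $L$ of lattice points of an explicit polytope $\bP$, proves $K=L$ (Lemma~\ref{lemma1}), and then maximizes $|K|$ over $\aa$ via the fact that balanced partitions maximize elementary symmetric polynomials (Lemma~\ref{lemma2}); the multi-ball structure of $K$ is absorbed by this polytope identification rather than matched centre by centre through slices. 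Unless you can supply the exact cross-slice accounting---which is precisely the open content of the conjecture for general $n$---your approach does not yield the statement.
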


We first verify the conjecture when $n$ is sufficiently large for fixed $r,s$. 
Let $\ee_i$ be the $i$-th standard base of $\R^n$, that is, 
$(\ee_i)_j=\delta_{ij}$. Let $\tilde\ee_0=\zero$, and
$\tilde\ee_i=\sum_{j=1}^i\ee_j$ for $1\leq i\leq n$, e.g., $\tilde\ee_n=\one$.

\begin{thm}\label{thm1}
Let $r\geq 2$ and $s$ be fixed positive integers. 
Write $s=dr+p$ where $d$ and $p$ are non-negative integers with $0\leq p<r$.
Then there exists an $n_0(r,s)$ such that if $n>n_0(r,s)$ 
and $A\subset \N^n$ is $r$-wise $s$-union, then 
\[
 |A|\leq\left| \DD(\UU(\tilde\ee_p,d))\right|.
\]
Moreover if equality holds, then $A$ is isomorphic to 
$\DD(\UU(\tilde\ee_p,d))=K(r,n,\tilde\ee_p,d)$.
\end{thm}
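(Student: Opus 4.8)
The plan is to prove this as an Erd\H{o}s--Ko--Rado type exact result that becomes available once $n$ is large relative to $r$ and $s$. First I would reduce to the case where $A$ is a down-set: if $\cc\prec\aa\in A$, then $\cc\vee\aa_2\vee\cdots\vee\aa_r\prec\aa\vee\aa_2\vee\cdots\vee\aa_r$ for all $\aa_2,\dots,\aa_r\in A$, so adjoining $\cc$ keeps $A$ $r$-wise $s$-union without decreasing $|A|$; thus we may assume $A=\DD(A)$. Applying the union bound to $r$ copies of a single vector gives $|\aa|\le s$ for every $\aa\in A$, so all weights, and hence the number of vectors sharing a fixed support, are bounded by constants depending only on $s$.

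The engine of the argument concerns the supports. Writing $\supp(\aa)$ for the set of nonzero coordinates, the weight of a join dominates the size of the union of the corresponding supports, so the family $\mathcal F=\{\supp(\aa):\aa\in A\}$ has the property that any $r$ of its members have union of size at most $s$. I would then show, via a $\Delta$-system (sunflower) analysis, that members of $\mathcal F$ of size $k>d$ concentrate on a small core. Indeed, if $F_1,\dots,F_r\in\mathcal F$ form an $r$-petal sunflower with core $Y$ and common petal size $k$, then $|Y|+r(k-|Y|)=|F_1\cup\cdots\cup F_r|\le s=dr+p$; for $k>d$ this forbids sunflowers whose core is smaller than a threshold that increases with $k$. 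A family of $k$-sets forbidding such small-core sunflowers essentially lives in the stars through a fixed core (a Hilton--Milner/Erd\H{o}s matching phenomenon), and a short computation with the threshold shows that the number of admissible supports of each size $k>d$ is $O(n^{d})$, governed by cores of size at most $p$.

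With the supports under control I would count $|A|$ layer by layer. Supports of size $k\le d$ contribute at most $\binom{n}{k}=O(n^{d})$ distinct supports, each carrying boundedly many vectors, while supports of size $k>d$ contribute only $O(n^{d})$ by the previous step; summing the finitely many layers gives $|A|=O(n^{d})$, matching the order of $|\DD(\UU(\tilde\ee_p,d))|$, which one checks equals $\left|\{\aa\in\N^n:|\aa|\le d+|\supp(\aa)\cap[p]|\}\right|$. To reach the exact bound I would optimize the leading ($n^{d}$) coefficient over the admissible core structures and per-layer configurations, using the stability form of the previous step to show it is maximized precisely when a single fixed core of size exactly $p$ is used and every vector compatible with the $r$-wise $s$-union condition is present, i.e.\ when $A$ is the full down-set $\DD(\UU(\tilde\ee_p,d))$.

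The main obstacle is exactly this passage from the asymptotic $O(n^{d})$ bound to the sharp count with a uniqueness statement: the extremal family is a specific down-set rather than ``all vectors of support at most $d$'', so one must rule out, down to lower-order terms, every competing trade-off---enlarging the core beyond $p$, or exchanging many small-support vectors for high-support ones. This is where $n\ge n_0(r,s)$ is essential: any such deviation costs a positive fraction of the $n^{d}$ main term while gaining only $O(n^{d-1})$, so for large $n$ it strictly decreases $|A|$. Carrying out this optimization with the stability version of the sunflower step both yields the bound and pins down the extremal configuration up to a permutation of coordinates, giving $A\cong\DD(\UU(\tilde\ee_p,d))$.
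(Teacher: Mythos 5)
Your reduction to down-sets, the weight bound $|\aa|\leq s$, and the order-of-magnitude estimate $|A|=O(n^d)$ are all sound (the paper reaches the same order by different means). But the proof has a genuine gap exactly where you locate ``the main obstacle'': the passage from $O(n^d)$ to the sharp constant $2^p/d!$ together with uniqueness is asserted (``any such deviation costs a positive fraction of the $n^d$ main term while gaining only $O(n^{d-1})$''), not proved, and your support-level sunflower framework cannot deliver it, because the $r$-wise $s$-union condition is not a condition on supports plus a per-support multiplicity bound: it couples vectors across \emph{different} supports. Concretely (say $d\geq 1$), consider the down-closure of $\{\one|_S+\ee_j:\ S\in\binom{[n]}{d},\ j\in S\}$, where $\one|_S$ is the indicator vector of $S$. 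Its supports all have size at most $d$, so any $r$ of them have union of size at most $rd\leq s$ and your family $\mathcal F$ shows no forbidden sunflower at all; moreover each support carries boundedly many vectors. Yet $r$ members $\one|_{S_1}+\ee_{j_1},\ldots,\one|_{S_r}+\ee_{j_r}$ with pairwise disjoint $S_i$ have join of weight $r(d+1)=rd+r>rd+p=s$, so this family is far from $r$-wise $s$-union. Hence the ``optimization over admissible core structures and per-layer configurations'' cannot be carried out support by support; it must invoke the vector-level union condition, and that optimization \emph{is} the content of the theorem, not a short computation.

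The paper's proof supplies precisely the machinery that is missing here, and none of it appears in your sketch. It first splits according to whether $A$ is $t$-wise $(dt+p)$-union for every $2\leq t\leq r$; if not, $|A|=O(n^{d-1})$ immediately, and if so, one gets the much stronger weight bound $|\aa|\leq d+p$ and can reuse the intermediate conditions later (crucially at $t=p+1$). It then fixes a join $\bb=\bb_1\vee\cdots\vee\bb_u$ with $u\leq p$ maximal subject to $|\bb|=u(d+1)$, which forces $|\aa\setminus\bb|\leq d$ for all $\aa\in A$, and introduces the notion of a \emph{rich} $\bb'\prec\bb$ (one extendable by $m$ weight-$d$ vectors pairwise disjoint outside $\bb$, with $m=r+1$ or $dr$). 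A greedy disjointification argument shows that the join $\tilde\bb$ of all rich vectors satisfies $|\tilde\bb|\leq p$; the Erd\H{o}s matching theorem bounds the contribution of non-rich parts by $O(n^{d-1})$, giving $|A|\leq (2^{|\tilde\bb|}/d!)n^d+O(n^{d-1})$; and in the critical case $|\tilde\bb|=p$ with all $2^p$ subvectors rich, a second disjointification argument shows that any $\aa\in A\setminus\DD(\UU(\tilde\ee_p,d))$ yields $r$ members of $A$ whose join has weight $s+1$, a contradiction --- this is what pins down the extremal configuration exactly. Without an analogue of the intermediate $t$-wise conditions and of richness, your outline establishes the right order of magnitude but neither the stated bound nor the uniqueness.
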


We mention that the case $A\subset\{0,1\}^n$ of 
Conjecture is posed in \cite{F1} and partially solved in \cite{F1,F2},
and the case $r=2$ of Theorem~\ref{thm1} is proved in \cite{FT}
in a slightly stronger form.
We also notice that if $A\subset\{0,1\}^n$ is
2-wise $(2d+p)$-union, then the Katona's $t$-intersection theorem \cite{Katona}
states that $|A|\leq |\DD(\UU(\tilde\ee_p,d)\cap\{0,1\}^n)|$ for all $n\geq s$.

Next we show that the conjecture is true if $n=r+1$. 
We also verify the conjecture on general $n$ if $A$ satisfies some additional 
properties described below.

Let $A\subset\N^{n}$ be $r$-wise $s$-union. For $1\leq i\leq n$ let
\begin{equation}\label{def:mi}
 m_i:=\max\{x_i:\xx\in A\}. 
\end{equation}
If $n-r$ divides $|\mm|-s$, then we define
\begin{equation}\label{def:d}
 d:=\frac{|\mm|-s}{n-r}\geq 0,  
\end{equation}
and for $1\leq i\leq n$ let
\begin{equation}\label{def:ai}
a_i:=m_i-d, 
\end{equation}
and we assume that $a_i\geq 0$.
In this case we have $|\aa|=s-rd$. 
Since $|\aa|\geq 0$ it follows that
$d\leq\lfloor\frac sr\rfloor$.
For $1\leq i\leq n$ define $P_i\in\N^n$ by
\begin{equation}\label{def:Pi}
 P_i:=\aa+d\ee_i, 
\end{equation}
where $\ee_i$ denotes the $i$th standard base, for example,
$P_2=(a_1,a_2+d,a_3,\ldots,a_n)$.

\begin{thm}\label{mainthm}
Let $A\subset\N^{n}$ be $r$-wise $s$-union.
Assume that the sequences $P_i$ are well-defined and
\begin{equation}\label{assumption}
\{P_1,\ldots,P_n\}\subset A. 
\end{equation}
Then it follows that
\[
 |A|\leq\max_{0\leq d'\leq\lfloor\frac sr\rfloor}
\left| K(r,n,\aa',d')\right|,
\]
where $\aa'\in\N^{n}$ is a balanced partition with $|\aa'|=s-rd'$.
Moreover if equality holds, then
$A=K(r,n,\aa',d')$ for some $0\leq d'\leq\lfloor\frac sr\rfloor$.
\end{thm}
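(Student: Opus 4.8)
The plan rests on a single clean membership criterion for the candidate family, from which both the upper bound and the balancing argument flow. First I would prove that, with $u=n-r+1$,
\[
\cc\in K(r,n,\aa,d)\iff\text{the sum of the $u$ largest of the numbers }(c_k-a_k)^+\text{ is at most }d.
\]
Writing $y_k=(c_k-a_k)^+$, membership of $\cc$ in the $i$-th ball $\DD(\UU(\aa+i\one,d-ui))$ unfolds to the condition $\sum_k(y_k-i)^+\le d-ui$ for some integer $0\le i\le\lfloor d/u\rfloor$. For every such $i$ the left side plus $ui$ dominates the sum of the $u$ largest $y_k$, while evaluating at $i=y_{(u+1)}$, the $(u+1)$-st largest $y_k$ (which exists since $r\ge2$), makes it exactly equal to that sum; together these give the displayed criterion, and also show $uy_{(u+1)}\le d$ so the relevant $i$ lies in range.

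With this criterion in hand, the hypothesis $\{P_1,\dots,P_n\}\subset A$ immediately forces $A\subset K(r,n,\aa,d)$. Fix $\xx\in A$ and an $(r-1)$-subset $J\subset[n]$, and apply the $r$-wise $s$-union condition to $\xx$ together with the $P_j$ for $j\in J$. Using $(P_j)_k=a_k+d$ if $k=j$ and $a_k$ otherwise, and $x_k\le m_k=a_k+d$, the join has weight $|\aa|+(r-1)d+\sum_{k\notin J}(x_k-a_k)^+$, so the condition reduces after cancellation to $\sum_{k\notin J}(x_k-a_k)^+\le d$. As $J$ runs over all $(r-1)$-subsets its complement runs over all $u$-subsets, so the sum of the $u$ largest of the $(x_k-a_k)^+$ is at most $d$; by the criterion $\xx\in K(r,n,\aa,d)$. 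Hence $|A|\le|K(r,n,\aa,d)|$. (One may first replace $A$ by $\DD(A)$, which preserves the union property, the values $m_i$, and the hypothesis, but this is not needed for the bound.)

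It remains to show that balancing $\aa$ does not decrease $|K|$: for fixed weight, $|K(r,n,\aa,d)|$ is maximized, uniquely up to a permutation of coordinates, by the balanced partition. I would argue one balancing step at a time. Fix two coordinates, say $1,2$ with $a_1\ge a_2+2$, freeze the remaining coordinates $c_3,\dots,c_n$ at a profile, and let $\sigma_k$ be the sum of the $k$ largest of the frozen numbers $(c_j-a_j)^+$. Since the sum of the $u$ largest of all $n$ numbers equals the maximum over $t\in\{0,1,2\}$ of (the sum of the $t$ largest of $y_1,y_2$) plus $\sigma_{u-t}$, the criterion restricts $(y_1,y_2)$ to a region $\{y_1,y_2\le\alpha,\ y_1+y_2\le\beta\}$ with profile-dependent $0\le\alpha\le\beta$. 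Counting the integers $c_k$ producing a given $y_k$ (namely $a_k+1$ choices when $y_k=0$ and one choice otherwise), for each profile admitting a point the number of points of $K$ in this slice equals $(a_1+1)(a_2+1)+\alpha(a_1+a_2+2)+Q$, where $Q$ depends only on $\alpha,\beta$. Summing over all profiles gives $|K(r,n,\aa,d)|=P\,(a_1+1)(a_2+1)+C$, with $P\ge1$ the number of admitting profiles and $C$ independent of how the fixed sum $a_1+a_2$ is split. Because $(a_1+1)(a_2+1)$ strictly increases as $a_1,a_2$ move toward equality, a balancing step strictly increases $|K|$ whenever $|a_1-a_2|\ge2$; iterating over all pairs yields the lemma.

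Combining the three steps gives $|A|\le|K(r,n,\aa,d)|\le|K(r,n,\aa',d)|\le\max_{0\le d'\le\lfloor s/r\rfloor}|K(r,n,\aa',d')|$, where $\aa'$ denotes the balanced partition of the relevant weight; this is the asserted bound. If equality holds throughout, then $A\subset K(r,n,\aa,d)$ with equal cardinalities forces $A=K(r,n,\aa,d)$, while $|K(r,n,\aa,d)|=|K(r,n,\aa',d)|$ forces $\aa$ to already be balanced by the strictness above, and $d$ to attain the outer maximum; thus $A=K(r,n,\aa',d')$ for a balanced $\aa'$, as claimed. I expect the balancing lemma to be the main obstacle: the global optimality of the balanced partition is not obvious, and what makes it tractable is the reduction to two free coordinates together with the explicit slice count, both powered by the membership criterion established at the outset.
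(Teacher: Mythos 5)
Your proposal is correct, and while its first two steps are reformulations of what the paper does, your treatment of the balancing step is genuinely different. Your membership criterion (the sum of the $u$ largest values $(c_k-a_k)^+$ is at most $d$) is, for points of $\N^n$, exactly the statement $K=L$ of Lemma~\ref{lemma1}: the criterion is the conjunction of the polytope inequalities \eqref{L:eq2}, and your witness $i=y_{(u+1)}$ is the same choice $i'=i_{n-r+2}$ made in the paper's Claim~\ref{claim3}. Your derivation of $A\subset K$ --- joining $\xx$ with the $P_j$, $j\in J$, for each $(r-1)$-set $J$ --- is a streamlined version of the paper's unlabeled claim that $m_I=\sum_{i\in I}a_i+d$ for all $1\le |I|\le n-r+1$, which the paper proves by an equality chain over a partition of $[n]$ into $r$ parts; both give $A\subset L$. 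The real divergence is the balancing lemma. The paper (Lemma~\ref{lemma2}) computes $|K|$ exactly as a positive combination of elementary symmetric polynomials $\sigma_{n-j}(\aa)$ and $\sigma_{n-j}(\aa+i\one)$, and then invokes the classical fact (Hardy--Littlewood--P\'olya, Theorem 52) that these are simultaneously and uniquely maximized at balanced partitions; you avoid any exact enumeration, instead slicing $K$ along two free coordinates and showing $|K|=P\,(a_1+1)(a_2+1)+C$ with $P\ge 1$ and $C$ independent of how $a_1+a_2$ is split, so that each balancing step strictly increases $|K|$. Your route is more self-contained and elementary (no closed formula, no external citation); the paper's route yields the explicit value of $|K|$ as a by-product. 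Two small points you should nail down: (i) when $r=2$ there are only $n-2<u$ frozen coordinates, so $\sigma_u$ (and for the degenerate case $\sigma_{u-1}$) must be read as the sum of all frozen values, equivalently the maximum runs only over those $t$ with $u-t\le n-2$; the identity and the region $\{y_1,y_2\le\alpha,\ y_1+y_2\le\beta\}$ survive under this convention; and (ii) the iteration ``balance one pair at a time'' needs a terminating potential (e.g.\ $\sum_i a_i^2$ strictly decreases), after which your strictness statement shows that only balanced $\aa$ can attain the maximum, which is exactly what the equality case requires. Neither point is a gap, just bookkeeping.
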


We will show that the assumption \eqref{assumption} is satisfied
when $n=r+1$, see Corollary~\ref{cor} in the last section.

Notation: For $\aa,\bb\in\N^n$ we define 
$\aa\setminus\bb\in\N^n$ by $(\aa\vee\bb)-\bb$, in other words,
$(\aa\setminus\bb)_i:=\max\{a_i-b_i,0\}$.
The support of $\aa$ is defined by
$\supp(\aa):=\{j:a_j>0\}$.

\section{Proof of Theorem~\ref{thm1} --- the case when $n$ is large}
Let $r,s$ be given, and let $s=dr+p$, $0\leq p<r$.
We consider the situation $n\to\infty$ for fixed $r,s,d$, and $p$.

\begin{claim}
$|\DD(\UU(\tilde\ee_p,d))|=
\sum_{j=0}^p\binom pj\binom{n-j+d}d=(2^p/d!)n^d+O(n^{d-1})$.
\end{claim}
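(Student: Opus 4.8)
The plan is to first describe the set $\DD(\UU(\tilde\ee_p,d))$ as the solution set of a single inequality, then count it by a stars-and-bars argument, and finally read off the asymptotics.

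First I would determine exactly which $\cc\in\N^n$ lie in $\DD(\UU(\tilde\ee_p,d))$. By definition $\cc$ belongs to this set iff there is some $\boldsymbol\epsilon\in\N^n$ with $|\boldsymbol\epsilon|=d$ and $\cc\prec\tilde\ee_p+\boldsymbol\epsilon$. Coordinatewise this means $\epsilon_i\ge c_i-1$ for $i\le p$ and $\epsilon_i\ge c_i$ for $i>p$; since the $\epsilon_i$ are non-negative, the minimal admissible total weight is
\[
w(\cc):=\sum_{i\le p}\max(c_i-1,0)+\sum_{i>p}c_i.
\]
Because enlarging any $\epsilon_i$ keeps $\cc$ below $\tilde\ee_p+\boldsymbol\epsilon$, and we may freely add the surplus $d-w(\cc)$ to a single coordinate, the existence of an $\boldsymbol\epsilon$ of total weight exactly $d$ is equivalent to $w(\cc)\le d$. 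Hence $\DD(\UU(\tilde\ee_p,d))=\{\cc\in\N^n: w(\cc)\le d\}$.

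Next I would count these $\cc$ by classifying the first $p$ coordinates according to whether they vanish. Fix the set of indices $i\le p$ with $c_i=0$; there are $\binom{p}{j}$ choices of such a set of size $j$, and these coordinates contribute $0$ to $w(\cc)$. Each of the remaining $p-j$ coordinates among the first $p$ satisfies $c_i\ge 1$, so writing $c_i=1+b_i$ with $b_i\ge0$ its contribution to $w(\cc)$ equals $b_i$; each of the last $n-p$ coordinates contributes $b_i=c_i\ge0$. Thus, after the choice of the vanishing set, the condition $w(\cc)\le d$ becomes a single inequality $\sum b_i\le d$ in $n-j$ non-negative integer variables, which by stars and bars has $\binom{n-j+d}{d}$ solutions. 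Summing over $j$ gives
\[
\left|\DD(\UU(\tilde\ee_p,d))\right|=\sum_{j=0}^p\binom{p}{j}\binom{n-j+d}{d},
\]
the first claimed equality. For the asymptotics I would observe that for each fixed $j$ the quantity $\binom{n-j+d}{d}=\frac{(n-j+d)\cdots(n-j+1)}{d!}$ is a polynomial in $n$ of degree $d$ with leading coefficient $1/d!$, so $\binom{n-j+d}{d}=n^d/d!+O(n^{d-1})$. Multiplying by $\binom{p}{j}$ and summing, the error terms accumulate to $O(n^{d-1})$, while the leading terms sum to $\bigl(\sum_{j=0}^p\binom{p}{j}\bigr)n^d/d!=2^p n^d/d!$, giving the stated estimate.

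The main obstacle I anticipate is the first step: getting the characterization of the down-set right, in particular correctly handling the threshold $1$ in the first $p$ coordinates (so that $c_i\in\{0,1\}$ incurs no cost) and justifying the reduction from weight exactly $d$ to weight at most $d$ via padding. Once this reduction is in place, the remaining counting and asymptotic estimates are routine.
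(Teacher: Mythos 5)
Your proof is correct and follows essentially the same route as the paper's: both characterize the down-set (your minimal-weight function $w(\cc)$ is exactly the paper's decomposition $\cc=\xx+\yy$ with $\yy\prec\tilde\ee_p$, $|\xx|\leq d$), partition according to which of the first $p$ coordinates vanish (the paper indexes by the complementary support set $I\subset[p]$), count each class by stars and bars, and extract the asymptotics from $\binom{n-j+d}{d}=n^d/d!+O(n^{d-1})$. No gaps; the padding argument you flag as the delicate point is handled correctly.
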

\begin{proof}
By definition we have
\[
 \DD(\UU(\tilde\ee_p,d))=\{\xx+\yy\in\N^n:|\xx|\leq d, \,\yy\prec{\tilde\ee_p}\}.
\]
We rewrite the RHS by classifying vectors according to their supports.
For $I\subset[p]$ let ${\tilde\ee_p}|_I$ be the restriction of 
${\tilde\ee_p}$ to $I$, that is,
$({\tilde\ee_p}|_I)_i$ is $1$ if $i\in I$ and $0$ otherwise, and let
\[
R(I):=\{{\tilde\ee_p}|_I+\zz:\supp(\zz)\subset I\sqcup([n]\setminus[p]),\,|\zz|\leq d\}.
\]
Then we have 
$\DD(\UU(\tilde\ee_p,d))=\bigsqcup_{I\subset[p]} R(I)$.
For each $I\in\binom{[p]}i$ 
the number of $\zz$ in $R(I)$ equals the number of nonnegative
integer solutions of $z_1+z_2+\cdots+z_{i+(n-p)}\leq d$.
Thus it follows that $|R(I)|=\binom{n-(p-i)+d}d$, and
\[
|\DD(\UU(\tilde\ee_p,d))|=\sum_{i=0}^p\binom pi\binom{n-(p-i)+d}d
=\sum_{j=0}^p\binom pj\binom{n-j+d}d.
\]
The RHS is further rewritten using
$\binom{n-j+d}d=n^d/d!+O(n^{d-1})$ and
$\sum_{j=0}^p\binom pj=2^p$, as needed.
\end{proof}

Let $A\subset\N^n$ be $r$-wise $s$-union with maximal size. 
So $A$ is a down set. We will show that 
$|A|\leq |\DD(\UU(\tilde\ee_p,d))|$.

First suppose that  {there is a $t$} with $2\leq t\leq r$ such that
$A$ is $t$-wise $(dt+p)$-union, but not $(t-1)$-wise $(d(t-1)+p)$-union.
In this case, by the latter condition, 
there are $\bb_1,\ldots,\bb_{t-1}\in A$ such that
$|\bb|\geq d(t-1)+p+1$, where $\bb=\bb_1\vee\cdots\vee\bb_{t-1}$.
Then, by the former condition, for every $\aa\in A$ 
it follows that $|\aa\vee\bb|\leq dt+p$, so $|\aa\setminus\bb|\leq d-1$.
This gives us 
\[
 A\subset \{\xx+\yy\in\N^n: |\xx|\leq d-1,\, \yy\prec\bb\}.
\]
There are $\binom{n+(d-1)}{d-1}$ 
choices for $\xx$ satisfying $|\xx|\leq d-1$.
On the other hand, the number of $\yy$ with $\yy\prec\bb$ is independent
of $n$ (so it is a constant depending on $r$ and $s$ only). 
In fact $|\bb|\leq (t-1)s<rs$, and there are less than $2^{rs}$ choices 
for $\yy$. Thus we get $|A|<\binom{n+(d-1)}{d-1}2^{rs}=O(n^{d-1})$ 
and we are done.

Next we suppose that 
\begin{equation}\label{dt+p union}
\text{$A$ is $t$-wise $(dt+p)$-union for all $1\leq t\leq r$. }
\end{equation}
The case $t=1$ gives us $|\aa|\leq d+p$ for every $\aa\in A$.
If $p=0$, then this means that $A\subset\DD(\UU(\zero,d))$, 
which finishes the proof for this case. 
So, from now on, we assume that $1\leq p<r$.
We will see that 
there is a $u$ with $u\geq 1$ such that there exist
$\bb_1,\ldots,\bb_u\in A$ satisfying 
\begin{equation}\label{def:u}
 |\bb|=u(d+1),
\end{equation}
where $\bb:=\bb_1\vee\cdots\vee\bb_u$. In fact we have \eqref{def:u} for
$u=1$, if otherwise $A\subset\DD(\UU(\zero,d))$.
On the other hand, 
setting $t=p+1 \leq r$ in \eqref{dt+p union}, 
we see that $A$ is $(p+1)$-wise $((p+1)(d+1)-1)$-union,
and \eqref{def:u} fails if $u=p+1$.
So we choose maximal $u$ with $1\leq u\leq p$ satisfying \eqref{def:u}, and fix 
$\bb=\bb_1\vee\cdots\vee\bb_u$. By this maximality, for every $\aa\in A$,
it follows that $|\aa\vee\bb|\leq(u+1)(d+1)-1$, and
\begin{equation}\label{a-b<=d}
 |\aa\setminus\bb|=|\aa\vee\bb|-|\bb|\leq d.
\end{equation}
Using \eqref{a-b<=d} we have $A\subset\bigcup_{i=0}^d A_i$, where
\[
 A_i:=\{\xx+\yy\in A: |\xx|=i,\, \yy\prec\bb\}.
\]
Then we have $|A_i|\leq \binom{n+i}i2^{|\bb|}$. Noting that 
$|\bb|\leq u(d+1)<r(d+1)=O(1)$ 
it follows $\sum_{i=0}^{d-1}|A_i|=O(n^{d-1})$.
So the size of $A_d$ is essential. 

We naturally identify $\aa\in A$ with a subset of 
$[n]\times\{1,\ldots,d+p\}$. Formally let 
\[
 \phi(\aa):=\{(i,j):1\leq i\leq n,\, 1\leq j\leq a_i\},
\]
for example, if $\aa=(1,0,2)$, then $\phi(\aa)=\{(1,1),(3,1),(3,2)\}$.
Define $m=m(d)$ to be $r+1$ if $d=1$ and $dr$ if $d\geq 2$.
We say that $\bb'\prec\bb$ is \emph{rich} if there exist $m$ vectors
$\cc_1,\ldots,\cc_m$ of weight $d$ 
such that $\bb'\vee\cc_j\in A$ for every $j$, and
the $m+1$ subsets $\phi(\cc_1),\ldots,\phi(\cc_m),\phi(\bb)$ are 
pairwise disjoint. 
In this case $\bb''\vee\cc_j\in A$ for all $\bb''\prec\bb'$
because $A$ is a down set. This means that richness is hereditary, namely,
if $\bb'$ is rich and $\bb''\prec\bb'$, then $\bb''$ is rich as well. 
Informally, $\bb'$ is rich if it can be extended
to a $(|\bb'|+d)$-element subset of $A$ in $m$ ways disjointly outside $\bb$.
We are comparing our family $A$ with the reference family 
$\DD(\UU(\tilde\ee_p),d)$, and we define $\tilde\bb$ which plays the  
role of $\tilde\ee_p$ in our family, namely, let us define
\[
 \tilde\bb:=\bigvee \{\bb'\prec\bb:\text{$\bb'$ is rich}\}.
\]

\begin{claim}\label{q<=p}
$|\tilde\bb|\leq p$.
\end{claim}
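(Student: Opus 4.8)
The plan is to argue by contradiction: assuming $|\tilde\bb|\geq p+1$, I would construct $r$ members of $A$ whose join has weight at least $s+1=dr+p+1$, contradicting that $A$ is $r$-wise $s$-union. (If no $\bb'\prec\bb$ is rich, then $\tilde\bb=\zero$ and the bound holds outright, so I may assume a rich vector exists, whence $\zero$ is rich by heredity.)

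First I would exploit that richness is hereditary to pass to single-coordinate witnesses. For each coordinate $i\in\supp(\tilde\bb)$ the definition of the join provides a rich $\bb'\prec\bb$ with $(\bb')_i=(\tilde\bb)_i=:h_i$; since $h_i\ee_i\prec\bb'$, the vector $h_i\ee_i$ is itself rich. Thus every coordinate of $\tilde\bb$ is witnessed by a rich vector supported on that single coordinate, each equipped with $m$ pairwise disjoint weight-$d$ extensions $\cc$ with $h_i\ee_i\vee\cc\in A$, all disjoint from $\bb$ and hence from every $h_j\ee_j$ (as $h_j\ee_j\prec\bb$).

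Next I would select a small set of coordinates carrying enough weight. Since $\sum_i h_i=|\tilde\bb|\geq p+1$ with each $h_i\geq 1$, greedily taking the largest heights yields $S\subseteq\supp(\tilde\bb)$ with $|S|\leq p+1\leq r$ and $\sum_{i\in S}h_i\geq p+1$. For each $i\in S$ I pick one extension, then pad with further weight-$d$ extensions of the rich vector $\zero$ up to $r$ vectors in total. Because the $h_i\ee_i$ parts lie inside $\supp(\bb)$ while all the $\cc$ parts lie outside it, the join of these $r$ members of $A$ has weight $\sum_{i\in S}h_i+rd\geq (p+1)+rd=s+1$, giving the contradiction.

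The hard part, and the reason $m$ is set to $dr$ (resp.\ $r+1$ when $d=1$), is guaranteeing that all $r$ chosen extensions can be made pairwise disjoint. I would choose them greedily: at the $k$-th step at most $(k-1)d\leq(r-1)d$ cells are already occupied, and since the $m$ available extensions of the current rich vector are pairwise disjoint, each occupied cell kills at most one of them; hence at least $m-(r-1)d$ extensions survive. For $d\geq 2$ this is $dr-(r-1)d=d\geq 1$, and for $d=1$ it is $(r+1)-(r-1)=2$, so an admissible extension always remains. This produces the required $r$ pairwise disjoint extensions and completes the contradiction, proving $|\tilde\bb|\leq p$.
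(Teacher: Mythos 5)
Your proof is correct, and it reaches the contradiction by a genuinely different route than the paper. The paper's proof picks $p+1$ rich vectors whose join has weight exactly $p+1$ (normalizing via heredity), greedily attaches pairwise disjoint weight-$d$ witnesses to each, and obtains $p+1$ members of $A$ whose join has weight $(p+1)(d+1)$, contradicting the intermediate condition \eqref{dt+p union} at $t=p+1$; its greedy only ever needs $m\geq pd+1$ surviving witnesses. You instead reduce to single-coordinate rich vectors $h_i\ee_i$ (a clean use of heredity that the paper does not make), choose $S$ with $|S|\leq p+1\leq r$ and $\sum_{i\in S}h_i\geq p+1$, and then pad with witnesses of the rich vector $\zero$ so as to produce a full set of $r$ members of $A$ whose join has weight at least $(p+1)+rd=s+1$, contradicting the original $r$-wise $s$-union hypothesis directly rather than \eqref{dt+p union}. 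What your route buys: the claim's proof becomes more self-contained, invoking only the ambient hypothesis plus the definition of richness and the down-set property. What it costs: your greedy pushes the disjointness budget further, requiring $m-(r-1)d\geq 1$ (margin $d$ when $d\geq 2$, margin $2$ when $d=1$) instead of the paper's weaker requirement $m-pd\geq 1$; fortunately $m(d)$ was defined with exactly enough room, consistent with the paper's later endgame for Theorem~\ref{thm1}, which also extracts $r-1$ pairwise disjoint witnesses. Your bookkeeping is sound throughout: the witnesses avoid $\phi(\bb)$ and hence every $h_j\ee_j\prec\tilde\bb\prec\bb$, each occupied cell kills at most one of the $m$ pairwise disjoint candidates, and disjointness of the $\phi$-images makes the join weight additive.
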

\begin{proof}
Suppose the contrary. Then $|\tilde\bb|>p$ and we can find rich 
$\bb'_1,\bb'_2,\ldots,\bb'_{p+1}$ {(with repetition if necessary)} 
such that 
$|\bb'_1\vee\cdots\vee\bb'_{p+1}|\ge p+1$. 
Since richness is hereditary we may assume that
$|\bb'_1\vee\cdots\vee\bb'_{p+1}|= p+1$. 
Let $\cc_1^{(i)},\ldots,\cc_m^{(i)}$ support the richness of $\bb'_i$. 
By definition $\phi(\cc_1^{(i)}),\ldots,\phi(\cc_m^{(i)})$ and $\phi(\bb)$ 
are pairwise disjoint.
Let $\aa_1:=\bb'_1\vee\cc_{j_1}^{(1)}\in A$, say, $j_1=1$.
Then choose $\aa_2:=\bb'_2\vee\cc_{j_2}^{(2)}$ so that
$\phi({\cc_{j_1}^{(1)}})$ and $\phi(\cc_{j_2}^{(2)})$ 
are disjoint. 
If $i\leq p$, then having $\aa_1,\ldots,\aa_i$ chosen, 
we only used $id$ elements as $\bigcup_{l=1}^i\phi(\cc_{j_l}^{(l)})$, 
which intersect at most $id$ of $\cc_1^{(i+1)},\ldots,\cc_m^{(i+1)}$. 
Then, since $id\leq pd< rd\leq m$, 
we still have some $\cc_{j_{i+1}}^{(i+1)}$, which is 
disjoint from any already chosen vectors.
So we can continue this procedure until we get 
$\aa_{p+1}:=\bb'_{p+1}\vee\cc_{j_{p+1}}^{(p+1)}\in A$
such that all 
$\phi(\cc_{j_1}^{(1)}),\ldots,\phi(\cc_{j_{p+1}}^{(p+1)})$ 
and $\phi(\bb)$ are disjoint.   
However, these vectors yield that
\begin{align*}
|\aa_1\vee\cdots\vee\aa_{p+1}|&
=|\bb'_1\vee\cdots\vee\bb'_{p+1}|
+|\cc_{j_1}^{(1)}|+\cdots+|\cc_{j_{p+1}}^{(p+1)}|\\
&= (p+1) + (p+1)d =(p+1)(d+1), 
\end{align*}
which contradicts \eqref{dt+p union} at $t=p+1$.
\end{proof}

If $\yy\prec\bb$ is not rich, then 
\[
\{\phi(\xx):\xx+\yy\in A_d, |\xx|=d\}
\]
is a family of $d$-element subsets on $(d+p)n$ vertices, which has no
$m$ pairwise disjoint subsets (so the matching number is $m-1$ or less).
Thus, by the Erd\H os matching theorem \cite{Erdos}, the size of this family is 
$O(n^{d-1})$. 
There are at most $2^{|\bb|}=O(1)$ choices for non-rich $\yy\prec\bb$, and 
we can conclude that the number of vectors
in $A_d$ coming from non-rich $\yy$ is $O(n^{d-1})$.
Then the remaining vectors in $A_d$ come from rich $\yy\prec\tilde\bb$, and
the number of such vectors is at most $2^{|\tilde\bb|}\binom{n+d}d$.
Note also that $\sum_{i=0}^{d-1}|A_i|=O(n^{d-1})$.
Consequently we get
\[
 |A|\leq 2^{|\tilde\bb|}\binom{n+d}d+O(n^{d-1})
=(2^{|\tilde\bb|}/d!)\,n^d+O(n^{d-1}).
\]
Recall that the reference family is of size 
$(2^p/d!)n^d+O(n^{d-1})$, and 
$|\tilde\bb|\leq p$ from Claim~\ref{q<=p}.
So we only need to deal with the case when $|\tilde\bb|=p$ and
there are exactly $2^p$ rich sets.
In other words, $\tilde\bb=\tilde\ee_p$ (by renaming coordinates if necessary)
and every $\bb'\prec\tilde\ee_p$ is rich.
We show that $A\subset\DD(\UU(\tilde\ee_p,d))$. Suppose the contrary, then
there is an $\aa\in A$ such that 
$|\aa'|\geq d+1$, where $\aa'=\aa\setminus\tilde\ee_p$.
Since $A$ is a down set we may assume that $|\aa'|=d+1$.
Now $\tilde\ee_p$ is rich and
let $\cc_1,\ldots,\cc_m$ be vectors assured by the richness.
We remark that $m-(d+1)\geq r-1$. In fact if $d=1$ then
$m-(d+1)
=r-1$, and if $d\geq 2$ then 
$m-(d+1)
=(r-1)(d-1)+r-2\geq r-1$.
So we may assume that $\phi(\cc_1),\ldots,\phi(\cc_{r-1})$ 
are pairwise disjoint and disjoint to $\phi(\aa)$ as well. 
Let $\aa_i:=\tilde\ee_p\vee\cc_i\in A$ for $1\leq i\leq r-1$.
Then we get 
\begin{align*}
|\aa\vee\aa_1\vee\cdots\vee\aa_{r-1}|
&=|\tilde\ee_p\vee\aa'|+|\cc_1|+\cdots+|\cc_{r-1}|\\
&=(p+d+1)+(r-1)d=dr+p+1=s+1, 
\end{align*}
which contradicts that $A$ is $r$-wise $s$-union. 
This completes the proof of Theorem~\ref{thm1}.

\section{The polytope $\bP$ and proof of Theorem~\ref{mainthm}}
Let $\aa=(a_1,\ldots,a_n)\in\N^n$ with $|\aa|=s-rd$ for some $d\in\N$.
We introduce a convex polytope $\bP\subset\R^{n}$, 
which will play a key role in our proof.
This polytope is defined by the following 
$n+\binom n1+\binom n2+\cdots +\binom {n}{n-r+1}$ inequalities:
\begin{align}
 x_i&\geq 0 &&\hskip -5em\text{if } 1\leq i\leq n,\label{L:eq1}\\
 \sum_{i\in I}x_i&\leq \sum_{i\in I}a_i+d 
  &&\hskip -5em\text{if }1\leq |I|\leq n-r+1,\, I\subset [n].\label{L:eq2}
\end{align}
Namely, 
\[
 \bP:=\{\xx\in\R^{n}: \text{$\xx$ satisfies \eqref{L:eq1} and \eqref{L:eq2}}\}.
\]
Let $L$ denote the integer lattice points in $\bP$:
\[
L= L(r,n,\aa,d):=\{\xx\in\N^{n}:\xx\in\bP\}. 
\]

\begin{la}\label{lemma1}
The two sets $K$ (defined by \eqref{def:K})
and $L$ are the same, and $r$-wise $s$-union.
\end{la}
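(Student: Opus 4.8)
The plan is to prove the two assertions of the lemma separately, and I would begin by recording two reformulations to use throughout. First, for $\bb\in\N^n$ and $e\in\N$ one has $\DD(\UU(\bb,e))=\{\cc\in\N^n:|\cc\setminus\bb|\le e\}$, since $\cc\prec\bb+{\boldsymbol\epsilon}$ for some ${\boldsymbol\epsilon}$ with $|{\boldsymbol\epsilon}|=e$ precisely when $|\cc\setminus\bb|\le e$. Second, writing $t^+:=\max\{t,0\}$, I claim $\xx\in L$ is equivalent to the statement that the sum of the $u$ largest of the numbers $(x_1-a_1)^+,\dots,(x_n-a_n)^+$ is at most $d$: for fixed $\xx$ the left side of \eqref{L:eq2} is maximised over $1\le|I|\le u$ by letting $I$ carry the largest positive values of $x_i-a_i$, so these size-$u$ constraints are the binding ones.

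To prove $K=L$ I would fix $\cc\in\N^n$ and let $w_{(1)}\ge\cdots\ge w_{(n)}$ be the numbers $c_i-a_i$ in decreasing order. By the first reformulation $\cc$ lies in the $i$-th ball of \eqref{def:K} iff $\sum_j(w_{(j)}-i)^+\le d-ui$, and since $ui>d$ makes this fail, $\cc\in K$ iff $\Phi:=\min_{i\ge0}\bigl(\sum_j(w_{(j)}-i)^++ui\bigr)\le d$; by the second reformulation $\cc\in L$ iff $\sum_{j=1}^u w_{(j)}^+\le d$. Thus it suffices to establish the identity $\Phi=\sum_{j=1}^u w_{(j)}^+$. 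The map $i\mapsto\sum_j(w_{(j)}-i)^++ui$ is convex and piecewise linear with integer breakpoints at the $w_{(j)}$, and its forward difference at $i$ is $u-\#\{j:w_{(j)}>i\}$, which is $\le0$ for $i<w_{(u)}$ and $\ge1$ for $i\ge w_{(u)}$; hence the minimiser over $i\ge0$ is $i^\ast=\max\{w_{(u)},0\}$. Evaluating at $i^\ast$ yields $\sum_{j=1}^u w_{(j)}$ if $w_{(u)}\ge0$ and $\sum_j w_{(j)}^+$ if $w_{(u)}<0$, and both equal $\sum_{j=1}^u w_{(j)}^+$. This gives $K=L$.

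For the union property I would take $\aa_1,\dots,\aa_r\in K$, set $\vv=\bigvee_k\aa_k$, and use $|\aa|=s-rd$ to rewrite $|\vv|=|\aa|+\sum_j\max_k\bigl((\aa_k)_j-a_j\bigr)$, so that $|\vv|\le s$ is equivalent to $\sum_j\max_k\bigl((\aa_k)_j-a_j\bigr)\le rd$. Writing $\aa_k\in\DD(\UU(\aa+i_k\one,d-ui_k))$ and $\ff_k:=\aa_k\setminus(\aa+i_k\one)$, one has $(\aa_k)_j-a_j\le i_k+(\ff_k)_j$ with $\ff_k\ge\zero$ and $|\ff_k|\le d-ui_k$, so it is enough to prove
\[
\textstyle\sum_j\max_k\bigl(i_k+(\ff_k)_j\bigr)\le rd.
\]
Two sub-cases are immediate: if every $i_k=0$ the left side is at most $\sum_j\sum_k(\ff_k)_j=\sum_k|\ff_k|\le rd$; and if one can pick for each $j$ a maximiser $k(j)$ so that each vector is chosen at most $u$ times, then grouping coordinates by $k(j)$ and applying \eqref{L:eq2} to each group (of size $\le u$) gives $|\vv|\le|\aa|+rd=s$.

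The main obstacle is exactly the remaining configuration, where no maximiser assignment is balanced. By the max-flow/min-cut (Hall) criterion this occurs only when some nonempty proper subset $W$ of $[r]$ is the set of all maximisers on a coordinate set $S$ with $|S|>u|W|$; here $W\ne[r]$ because $n\le ru$. The plan is to induct on $r$: on $S$ one has $\vv|_S=\bigvee_{k\in W}\aa_k|_S$, an instance with fewer than $r$ vectors, while the complementary coordinates are controlled by the remaining vectors. The step I expect to be hardest is the bookkeeping matching the parameter $u=n-r+1$ to the restricted problems and splitting the budget $rd$ as $|W|d+(r-|W|)d$; I would handle it using $ui_k\le d$ together with the fact that a coordinate won by a low-baseline vector forces compensating excess (formally, if $k$ wins $m_k$ coordinates then $m_k(i_{\max}-i_k)\le d-ui_k$), which lets the contribution of over-full vectors be charged against the budget freed by the empty and under-full ones.
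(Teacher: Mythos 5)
Your first half --- the proof that $K=L$ --- is correct and complete, and it takes a genuinely different route from the paper: instead of proving the two inclusions separately with explicit witnesses (the paper takes $i'=0$ when at most $n-r+1$ coordinates exceed $\aa$, and $i'=i_{n-r+2}$ otherwise), you encode membership in both sets through the single function $g(i)=\sum_j(w_{(j)}-i)^++ui$ and locate its integer minimiser $i^\ast=\max\{w_{(u)},0\}$ by a convexity/forward-difference computation. I checked the reformulations, the formula for the forward difference, and the evaluation at $i^\ast$; all are right. This is a clean packaging that, as a bonus, tells you exactly which ball of \eqref{def:K} an element of $L$ lies in.

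The second half --- that $K$ is $r$-wise $s$-union --- has a genuine gap. The reduction to $\sum_j\max_k\bigl(i_k+(\ff_k)_j\bigr)\le rd$ is correct, but you only settle two special configurations, and for what you yourself call ``the main obstacle'' (failure of the balanced maximiser assignment, i.e.\ a Hall violator $W\subsetneq[r]$) you give a plan, not a proof: the induction on $r$ is never set up, and it is unclear it can be, since the restricted instance on $S$ has $|S|>u|W|$ coordinates but only $|W|$ vectors, so its natural parameter $|S|-|W|+1$ is larger than $u$ while the hypotheses $|\ff_k|\le d-ui_k$ still refer to the old $u$; the budget split $|W|d+(r-|W|)d$ is asserted, not derived. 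The missing idea that closes \emph{all} cases at once --- and is essentially the paper's entire proof of its Claim~1 --- is to compare against the single reference level $i_{\max}:=\max_k i_k$ rather than to partition coordinates among vectors. Writing $t^+=\max\{t,0\}$, one has for each coordinate $j$ that $\max_k\bigl(i_k+(\ff_k)_j\bigr)\le i_{\max}+\sum_k\bigl((\ff_k)_j-(i_{\max}-i_k)\bigr)^+$, and summing over $j$ and using $\sum_j\bigl((\ff_k)_j-(i_{\max}-i_k)\bigr)^+\le\bigl(|\ff_k|-(i_{\max}-i_k)\bigr)^+\le d-(u-1)i_k-i_{\max}$ (the last quantity is $\ge d-ui_{\max}\ge0$) gives
\[
\sum_j\max_k\bigl(i_k+(\ff_k)_j\bigr)\le n\,i_{\max}+rd-(u-1)\sum_k i_k-r\,i_{\max}
= rd-(n-r)\Bigl(\sum_k i_k-i_{\max}\Bigr)\le rd,
\]
since $u-1=n-r$ and $\sum_k i_k\ge i_{\max}$. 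With this substituted for your case analysis, your write-up becomes a complete proof; as it stands, the union property is not proved.
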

\begin{proof}
This lemma is a consequence of the following three claims.

\begin{claim}\label{claim1}
The set $K$ is $r$-wise $s$-union.
\end{claim}

\begin{proof}
 Let $\xx_1,\xx_2,\ldots,\xx_r\in K$. We show that 
$|\xx_1\myor\xx_2\myor\cdots\myor\xx_r|\leq s$.
We may assume that $\xx_j\in\UU(\aa+i_j\one,d-ui_j)$, where $u=n-r+1$.
We may also assume that $i_1\geq i_2\geq \cdots \geq i_r$.
Let $\bb:=\aa+i_1\one$. Then, informally, 
$|\xx\setminus\bb|:=|(\xx\myor\bb)-\bb|$
counts the excess of $\xx$ above $\bb$, more precisely, it is
$\sum_{j\in[n]}\max\{0,x_j-b_j\}$. Thus we have
\begin{align*}
|\xx_1\myor\xx_2\myor\cdots\myor\xx_r| &\leq
|\bb|+\sum_{j=1}^r|\xx_j\setminus\bb|\\
&\leq|\aa|+ni_1+\sum_{j=1}^r\big((d-ui_j)-(i_1-i_j)\big)\\
&=|\aa|+dr+(n-r)i_1-\sum_{j=1}^r(u-1)i_j\\
&=s-(n-r)\sum_{j=2}^ri_j\leq s,
\end{align*}
as required.
\end{proof}

\begin{claim}\label{claim2}
$K\subset L$.
\end{claim}
\begin{proof}
Let $\xx\in K$. We show that $\xx\in L$, that is,
$\xx$ satisfies \eqref{L:eq1} and \eqref{L:eq2}.
Since \eqref{L:eq1} is clear by definition of $K$, 
we show that \eqref{L:eq2}. To this end we may assume that
$\xx\in\UU(\aa+i\one,d-ui)$, where $u=n-r+1$ and $i\leq\lfloor\frac du\rfloor$.
Let $I\subset[n]$ with $1\leq|I|\leq u$. Then $i|I|\leq ui$. Thus it follows
\[
 \sum_{j\in I}x_j\leq\sum_{j\in I}a_j+i|I|+(d-ui)\leq
\sum_{j\in I}a_j+d,
\]
which confirms \eqref{L:eq2}.
\end{proof}

\begin{claim}\label{claim3}
$K\supset L$.
\end{claim}
\begin{proof}
Let $\xx\in L$. We show that $\xx\in K$, that is,
there exists some $i'$ such that
$0\leq i'\leq \lfloor\frac d{n-r+1}\rfloor$ and
\[
 |\xx \setminus (\aa+i'\one)| \leq d-(n-r+1)i'.
\]
We write $\xx$ as
\[
 \xx=(a_1+i_1,a_2+i_2,\ldots,a_{n}+i_{n}),
\]
where we may assume that
$d\geq i_1\geq i_2\geq \cdots \geq i_{n}$.
We notice that some $i_j$ can be negative.
Since $\xx\in L$ it follows from \eqref{L:eq2} 
(a part of the definition of $L$) that 
if $1\leq|I|\leq n-r+1$ and $I\subset[n]$, then
\[
 \sum_{j\in I}i_j\leq d.
\]
Let $J:=\{j:x_j\geq a_j\}$ and we argue separately by the size of $|J|$.

If $|J|\leq n-r+1$, then we may choose $i'=0$. In fact,
\begin{align*}
 |\xx\setminus\aa|&=\max\{0,i_1\}+\max\{0,i_2\}+\cdots+\max\{0,i_{n-r+1}\}\\
 &=\max\bigg\{\sum_{j\in I}i_j: I\subset [n-r+1]\bigg\}\leq d.
\end{align*}

If $|J|\geq n-r+2$, then we may choose $i'=i_{n-r+2}$.
In fact, by letting $i':=i_{n-r+2}$, we have
\begin{align*}
|\xx\setminus(\aa+i'\one)|&=
(i_1-i')+(i_2-i')+\cdots+(i_{n-r+1}-i')\\
&\leq d-(n-r+1)i'.
\end{align*}
We need to check $0\leq i'\leq\lfloor\frac d{n-r+1}\rfloor$.
It follows from $|J|\geq n-r+2$ that $i'\geq 0$.
Also $d\geq i_1\geq i_2\geq \cdots\geq i_{n-r+2}$ and
$i_1+i_2+\cdots+i_{n-r+1}\leq d$ yield
$i'\leq\lfloor\frac d{n-r+1}\rfloor$.
\end{proof}
This completes the proof of Lemma~\ref{lemma1}.
\end{proof}

Let 
\[
 \sigma_k(\aa):=\sum_{K\in\binom{[n]}k}\prod_{i\in K}a_i
\]
be the $k$th elementary symmetric polynomial of $a_1,\ldots,a_n$.

\begin{la}\label{lemma2}
The size of $K(r,n,\aa,d)$ is given by 
\begin{align*}
|K(r,n,\aa,d)|&=
\sum_{j=0}^{n}\binom{d+j}j\sigma_{n-j}(\aa)\\
&\qquad+\sum_{i=1}^{\lfloor\frac du\rfloor}
\sum_{j=u+1}^{n} 
\left(
\binom{d-ui+j}j-\binom{d-ui+u}j
\right)\sigma_{n-j}(\aa+i\one),
\end{align*}
where $u=n-r+1$.
Moreover, for fixed $n,r,d$ and $|\aa|$, this size is maximized
if and only if $\aa$ is a balanced partition.
\end{la}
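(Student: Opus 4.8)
The plan is to prove the two assertions separately: first the exact size formula, then the extremality of balanced partitions. For the size formula I would exploit the description of $K$ as the union $\bigcup_{i=0}^{\lfloor d/u\rfloor} B_i$, where $B_i:=\DD(\UU(\aa+i\one,d-ui))$ and $u=n-r+1$, and count by decomposing this union into disjoint layers. The key observation is a convexity property: for fixed $\zz\in\N^n$ set $g(i):=|\zz\setminus(\aa+i\one)|+ui$, so that $\zz\in B_i$ iff $g(i)\le d$. Since $g(i+1)-g(i)=u-|\{j:z_j>a_j+i\}|$ and the cardinality on the right is non-increasing in $i$, the forward differences of $g$ are non-decreasing; hence $g$ is convex and $\{i:\zz\in B_i\}$ is a contiguous interval. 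Consequently $K=\bigsqcup_{i\ge 0}C_i$ with $C_0=B_0$ and $C_i=B_i\setminus B_{i-1}$ for $i\ge 1$, because a point of $B_i$ lying in some $B_{i'}$ with $i'<i$ must, by the interval property, already lie in $B_{i-1}$.

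Next I would count each layer. For a single ball a generating-function computation gives $|\DD(\UU(\bb,e))|=\sum_{j=0}^n\binom{e+j}{j}\sigma_{n-j}(\bb)$: indeed $\sum_{\zz}t^{|\zz\setminus\bb|}=\prod_{j=1}^n\bigl(b_j+\tfrac1{1-t}\bigr)=\sum_{k}\sigma_{n-k}(\bb)(1-t)^{-k}$, and extracting $[t^e]$ of $\tfrac1{1-t}$ times this expression yields the stated sum. Taking $\bb=\aa$, $e=d$ produces exactly the first sum $\sum_j\binom{d+j}{j}\sigma_{n-j}(\aa)=|B_0|=|C_0|$. For $i\ge1$ I would count $C_i=B_i\setminus B_{i-1}$ directly by sorting coordinates relative to the center $\aa+i\one$: write $S=\{j:z_j\ge a_j+i\}$, so that each coordinate outside $S$ contributes one of $a_j+i$ choices and nothing to the excess, while $j\in S$ gives $z_j=a_j+i+w_j$ with $w_j\ge0$. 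Then $|\zz\setminus(\aa+i\one)|=\sum_{j\in S}w_j=:W$ and $|\zz\setminus(\aa+(i-1)\one)|=W+|S|$, so membership in $C_i$ is exactly $d-ui+u+1-|S|\le W\le d-ui$. Summing $\binom{W+|S|-1}{|S|-1}$ over this range via the hockey-stick identity gives $\binom{d-ui+|S|}{|S|}-\binom{d-ui+u}{|S|}$ (the second term vanishing when $|S|>d-ui+u$), and summing over all $S$ with $|S|=j$ contributes the factor $\sigma_{n-j}(\aa+i\one)$; since the range is nonempty only for $|S|\ge u+1$, this reproduces the second sum.

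For the maximization I would regard $|K(r,n,\aa,d)|$ as a fixed non-negative linear combination of the quantities $\sigma_{n-j}(\aa+i\one)$, the coefficients $\binom{d+j}{j}$ and $\binom{d-ui+j}{j}-\binom{d-ui+u}{j}$ being non-negative (the latter because $j>u$ and $\binom{m}{j}$ is non-decreasing in $m$). Fixing $n,r,d$ and $|\aa|$ fixes every coefficient and every total $\sum_k(\aa+i\one)_k$, so only the symmetric functions vary. I would then run a smoothing argument: if $a_p\ge a_q+2$ for some $p,q$, the move $(a_p,a_q)\mapsto(a_p-1,a_q+1)$ leaves every $\sigma_{n-j}(\aa+i\one)$ weakly larger, as one sees by writing $\sigma_m=xy\,P+(x+y)Q+R$ in the two affected variables $x=a_p+i$, $y=a_q+i$ and noting that $x+y$ and the symmetric functions $P,Q,R$ in the remaining variables are unchanged while $xy$ strictly increases. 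For $\sigma_2(\aa)$ (appearing in the first sum with positive coefficient $\binom{d+n-2}{n-2}$) the relevant $P=\sigma_0=1>0$, so the move strictly increases this term, hence strictly increases $|K|$. Therefore no maximizer admits such a move, i.e.\ every maximizer is a balanced partition, while conversely all balanced partitions give the same value by symmetry; this establishes the ``if and only if''.

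The main obstacle I anticipate is the middle step: justifying the clean disjoint layering $C_i=B_i\setminus B_{i-1}$ via the convexity of $g$, and then matching the telescoped count of $C_i$ against the precise form of the correction term, in particular verifying that the boundary behaviour of the hockey-stick sum (the lower limit $\max\{0,d-ui+u+1-|S|\}$) correctly produces the subtracted binomial $\binom{d-ui+u}{j}$ together with its vanishing for large $j$. By comparison, the single-ball identity and the Schur-type smoothing argument are routine.
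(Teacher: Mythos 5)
Your proof is correct, and its skeleton coincides with the paper's: decompose $K$ into the base ball $B_0=\DD(\UU(\aa,d))$ and the ring layers $B_i\setminus B_{i-1}$ with $B_i=\DD(\UU(\aa+i\one,d-ui))$, count each layer by classifying points according to the set of coordinates where they reach the center (which is what produces the binomial coefficients times $\sigma_{n-j}(\aa+i\one)$), and finish with Schur-type monotonicity of the elementary symmetric polynomials. Three local differences are worth recording. First, you prove explicitly, via convexity of $g(i)=|\zz\setminus(\aa+i\one)|+ui$, that $\{i:\zz\in B_i\}$ is an interval, hence $B_i\setminus B_{i-1}=B_i\setminus\bigcup_{j<i}B_j$; the paper only excludes membership in the immediately preceding ball $B_{i-1}$ and asserts that points violating its inequality ``are already counted in the lower layers,'' which tacitly relies on exactly this interval property (it can also be verified directly using $|J|\geq u+1$, but the paper does not do so). Your convexity lemma closes that small gap cleanly. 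Second, you derive the single-ball count $|\DD(\UU(\bb,e))|=\sum_{j}\binom{e+j}{j}\sigma_{n-j}(\bb)$ by a generating-function computation, where the paper partitions the ball and counts lattice points directly; your hockey-stick evaluation of the ring layers, including the check that the subtracted term $\binom{d-ui+u}{j}$ vanishes for $j>d-ui+u$ and that only $j\geq u+1$ contributes, matches the paper's direct count of solutions of the double inequality. Third, for the extremal statement the paper cites Hardy--Littlewood--P\'olya, while you give a self-contained smoothing argument, with strictness supplied by the $\sigma_2(\aa)$ term --- which is precisely what the ``only if'' direction needs. Your version is a bit longer but more self-contained and more airtight; the paper's is shorter at the cost of an external citation and an informally justified disjointness claim.
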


\begin{proof}
For $J\subset[n]$ let $\xx|_J$ be the restriction of $\xx$ to $J$, that is,
$(\xx|_J)_i$ is $x_i$ if $i\in J$ and $0$ otherwise.

First we count the vectors in the base layer $\DD(\UU(\aa,d))$.
To this end we partition this set into $\bigsqcup_{J\subset[n]}A_0(J)$, where 
\[
 A_0(J)=\{\aa|_J+\ee+\bb:\supp(\ee)\subset J,\,|\ee|\leq d,\, 
\supp(\bb)\subset[n]\setminus J,\, b_i<a_i\text{ for }i\not\in J\}.
\]
The number of vectors $\ee$ with the above property is equal to the number of
non-negative integer solutions of the inequality
$x_1+x_2+\cdots+x_{|J|}\leq d$, which is $\binom{d+|J|}{|J|}$.
The number of vectors $\bb$ is clearly $\prod_{l\in[n]\setminus J}a_l$.
Thus we get
\[
\sum_{J\in\binom{[n]}j} |A_0(J)|=
\sum_{J\in\binom{[n]}j}\binom{d+|J|}{|J|}\prod_{l\in[n]\setminus J}a_l
=\binom{d+j}{j}\sigma_{n-j}(\aa),
\]
and $|\DD(\UU(\aa,d))|=\sum_{j=0}^n\binom{d+j}{j}\sigma_{n-j}(\aa)$.

Next we count the vectors in the $i$th layer:
\[
 \DD(\UU(\aa+i\one,d-ui))\setminus
 \left(\bigcup_{j=0}^{i-1}\DD(\UU(\aa+j\one,d-uj))\right).
\]
For this we partition the above set into $\bigsqcup_{J\subset[n]}A_i(J)$, where
\begin{align*}
A_i(J)=\{(\aa+i\one)|_J+\ee+\bb:&\supp(\ee)\subset J,\,
d-u(i-1)-|J|<|\ee|\leq d-ui,\\
&\supp(\bb)\subset [n]\setminus J,\, b_l<a_l+i\text{ for }l\not\in J\}.
\end{align*}
In this case we need $d-u(i-1)<|J|+|\ee|$ because the vectors satisfying
the opposite inequality are already counted in the lower layers 
$\bigcup_{j<i}A_j(J)$. 
We also notice that $d-u(i-1)-|J|<d-ui$ implies that $|J|>u$.
So $A_i(J)=\emptyset$ for $|J|\leq u$.
Now we count the number of vectors $\ee$ in $A_i(J)$, or equivalently,
the number of non-negative integer solutions of 
\[
d-u(i-1)-|J|<x_1+x_2+\cdots+x_{|J|}\leq d-ui.
\]
This number is $\binom{d-ui+j}j-\binom{d-ui+u}j$, where $j=|J|$.
On the other hand, the number of vectors $\bb$ in $A_i(J)$ is
$\prod_{l\in[n]\setminus J}(a_l+i)$. Consequently we get
\[
 \sum_{J\subset[n]}|A_i(J)|=
 \sum_{j=u+1}^n\left(\binom{d-ui+j}j-\binom{d-ui+u}j\right)
\sigma_{n-j}(\aa+i\one).
\]
Summing this term over $1\leq i\leq \lfloor\frac du\rfloor$ we finally 
obtain the second term of the RHS of $|K|$ in the statement of this lemma.
Then, for fixed $|\aa|$, the size of $K$ is maximized when $\sigma_{n-j}(\aa)$
and $\sigma_{n-j}(\aa+i\one)$ are maximized. By the property of symmetric
polynomials, this happens if and only if $\aa$ is a balanced partition,
see e.g., Theorem~52 in section 2.22 of \cite{HLP}.
\end{proof}

\begin{proof}[Proof of Theorem~\ref{mainthm}]
Let $A\subset\N^n$ be an $r$-wise $s$-union with \eqref{assumption}.
For $I\subset[n]$ let
\[
 m_I:=\max\bigg\{\sum_{i\in I}x_i:\xx\in A\bigg\}.
\]

\begin{claim}
If $I\subset[n]$ and $1\leq|I|\leq n-r+1$, then
\[
 m_I=\sum_{i\in I}a_i+d.
\]
\end{claim}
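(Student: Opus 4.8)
The plan is to establish the two inequalities $m_I\ge\sum_{i\in I}a_i+d$ and $m_I\le\sum_{i\in I}a_i+d$ separately. The lower bound is immediate: since $|I|\ge 1$, pick any $j\in I$; by the assumption \eqref{assumption} we have $P_j=\aa+d\ee_j\in A$, and because $j\in I$ the definition \eqref{def:Pi} gives $\sum_{i\in I}(P_j)_i=\sum_{i\in I}a_i+d$, whence $m_I\ge\sum_{i\in I}a_i+d$.

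For the upper bound I would fix an arbitrary $\xx\in A$ and bound $\sum_{i\in I}x_i$. The crucial point is that $|[n]\setminus I|=n-|I|\ge r-1$, which is exactly where the hypothesis $|I|\le n-r+1$ enters. Choose $r-1$ distinct indices $j_1,\dots,j_{r-1}\in[n]\setminus I$ and consider the join of the $r$ members $\xx,P_{j_1},\dots,P_{j_{r-1}}$ of $A$,
\[
 \yy:=\xx\vee P_{j_1}\vee\cdots\vee P_{j_{r-1}}.
\]
Since $A$ is $r$-wise $s$-union, $|\yy|\le s$. Now I would read off $\yy$ coordinatewise. At each index $j_t$ the vector $P_{j_t}$ supplies the value $a_{j_t}+d=m_{j_t}$, so $y_{j_t}=m_{j_t}$ (nothing in $A$ can exceed the coordinate maximum $m_{j_t}$); at each $i\in I$ every $P_{j_t}$ supplies only $a_i$, as $i\ne j_t$, so $y_i=\max\{x_i,a_i\}\ge x_i$; and at the remaining indices $y_i\ge a_i$ trivially. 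Summing these lower bounds, using $\sum_{t}m_{j_t}=\sum_{t}a_{j_t}+(r-1)d$ and $\sum_{i\notin I}a_i=|\aa|-\sum_{i\in I}a_i=s-rd-\sum_{i\in I}a_i$, I obtain
\[
 s\ge|\yy|\ge\sum_{i\in I}x_i-\sum_{i\in I}a_i+s-d,
\]
which rearranges to $\sum_{i\in I}x_i\le\sum_{i\in I}a_i+d$. As $\xx\in A$ was arbitrary this gives $m_I\le\sum_{i\in I}a_i+d$, and combined with the lower bound it proves the claim.

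The only genuine choice in the argument is the selection of the auxiliary vectors, and I expect $\yy=\xx\vee P_{j_1}\vee\cdots\vee P_{j_{r-1}}$ with all $j_t$ lying outside $I$ to be precisely the right one: each $P_{j_t}$ concentrates its extra weight $d$ on a coordinate disjoint from $I$, so it pins that coordinate to its maximum $m_{j_t}$ without inflating $\sum_{i\in I}y_i$. Once this family is fixed the remainder is routine coordinatewise bookkeeping, so I do not anticipate a real obstacle beyond verifying the count $|[n]\setminus I|\ge r-1$, which is exactly the role played by the constraint $|I|\le n-r+1$.
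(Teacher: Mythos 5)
Your proof is correct, but the mechanism behind the upper bound differs from the paper's. The lower bound is identical in both arguments: pick $j\in I$ and use $P_j\in A$. For the upper bound, the paper never fixes an arbitrary $\xx\in A$; instead it takes a partition $[n]=I_1\sqcup\cdots\sqcup I_r$ with $I_1=I$ (this is where $|I|\leq n-r+1$ enters, guaranteeing $r-1$ further nonempty parts), applies the $r$-wise $s$-union property to maximizers of each part to get $s\geq m_{I_1}+\cdots+m_{I_r}$, and then invokes the already-proved lower bound on every part to get $m_{I_1}+\cdots+m_{I_r}\geq |\aa|+rd=s$; the resulting sandwich forces all inequalities to be equalities, in particular $m_I=\sum_{i\in I}a_i+d$. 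You instead join an arbitrary $\xx\in A$ with the $r-1$ explicit vectors $P_{j_1},\ldots,P_{j_{r-1}}$, $j_t\notin I$, and do coordinatewise bookkeeping; the constraint $|I|\leq n-r+1$ plays exactly the same role (supplying $r-1$ indices outside $I$). What each route buys: the paper's equality-forcing argument avoids any coordinate computation and establishes tightness for all parts of the partition simultaneously, while your version is more constructive and makes the role of assumption \eqref{assumption} very transparent, since the same vectors $P_j$ witness both bounds. One small remark: your claim that $y_{j_t}=m_{j_t}$ exactly is not needed (and silently uses $d\geq 0$); the one-sided bound $y_{j_t}\geq a_{j_t}+d$ coming from $P_{j_t}$ is all your computation requires.
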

\begin{proof}
Choose $j\in I$. By \eqref{assumption} we have $P_j\in A$ and
\begin{equation}\label{eq:m_I}
 m_I\geq\sum_{i\in I}(P_j)_i=\sum_{i\in I}a_i+d.
\end{equation}
We need to show that this inequality is actually an equality.
Let $[n]=I_1\sqcup I_2\sqcup\cdots\sqcup I_r$ be a partition of $[n]$.
Then it follows that
\begin{align*}
s\geq m_{I_1}+m_{I_2}+\cdots+ m_{I_r}
\geq\sum_{i\in[n]}a_i+rd = s,
\end{align*}
where the first inequality follows from the $r$-wise $s$-union property of
$A$, and the second inequality follows from \eqref{eq:m_I}. Since the left-most
and the right-most sides are the same $s$, we see that all inequalities are 
equalities. This means that \eqref{eq:m_I} is equality, as needed.
\end{proof}
By this claim if $\xx\in A$ and $1\leq|I|\leq n-r+1$, then we have
\[
 \sum_{i\in I}x_i\leq m_I=\sum_{i\in I}a_i+d.
\]
This means that $A\subset L$.
Finally the theorem follows from Lemmas~\ref{lemma1} and \ref{lemma2}.
\end{proof}

\begin{cor}\label{cor}
If $n=r+1$, then Conjecture is true.
\end{cor}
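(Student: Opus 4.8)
The plan is to deduce the Corollary from Theorem~\ref{mainthm} by verifying that the hypothesis \eqref{assumption} holds for an extremal family. First I would let $A\subset\N^{r+1}$ be an $r$-wise $s$-union family of maximum size; such a family is automatically a down set, since its down-closure $\DD(A)$ is again $r$-wise $s$-union (because $\bb_l\prec\aa_l$ forces $\bigvee_l\bb_l\prec\bigvee_l\aa_l$) and at least as large. Write $\mm=\bigvee_{\xx\in A}\xx$, so its coordinates are the $m_i$ of \eqref{def:mi}. Since $n-r=1$ divides everything, \eqref{def:d} and \eqref{def:ai} put $d=|\mm|-s$ and $a_i=m_i-d$, with $|\aa|=s-rd$. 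The entire argument then reduces to showing $A\subseteq L(r,n,\aa,d)$: as $n-r+1=2$, the polytope $\bP$ is cut out only by the inequalities \eqref{L:eq2} with $|I|=1$ and $|I|=2$, so I must control $\sum_{i\in I}x_i$ over singletons and pairs.

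The key tool is a partition inequality. For any partition $[n]=I_1\sqcup\cdots\sqcup I_r$ into $r$ blocks, choosing for each block a vector $\xx^{(t)}\in A$ attaining $m_{I_t}:=\max_{\xx\in A}\sum_{i\in I_t}x_i$ and taking the join of these $r$ vectors, the $r$-wise $s$-union property gives $\sum_t m_{I_t}\leq|\bigvee_t\xx^{(t)}|\leq s$. When $n=r+1$ the only such partitions merge a single pair $\{i,j\}$ and leave the rest as singletons, so $m_{\{i,j\}}+\sum_{k\neq i,j}m_k\leq s$, that is $m_{\{i,j\}}\leq m_i+m_j-(|\mm|-s)=a_i+a_j+d$. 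Together with the trivial $x_i\leq m_i=a_i+d$, this is precisely \eqref{L:eq2} for every $\xx\in A$, so $A\subseteq L=K(r,n,\aa,d)$ by Lemma~\ref{lemma1}. The same pair inequality, via $m_{\{i,j\}}\geq\max\{m_i,m_j\}$, yields $d\leq\min_k m_k$, hence $a_i\geq 0$.

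What remains is to secure $d\geq 0$, and this is where global extremality must enter and where I expect the main obstacle to lie, since the partition inequalities alone bound $d$ only from above. If $d<0$ then $|\mm|<s$ and $A\subseteq\DD(\mm)$, so $|A|\leq\prod_i(m_i+1)$; on the other hand $K(r,n,\aa_0,0)=\DD(\aa_0)$, with $\aa_0$ a balanced partition of $s$, is a genuine $r$-wise $s$-union family by Lemma~\ref{lemma1}, of size $\prod_i((\aa_0)_i+1)$, which is strictly larger because the maximum of $\prod(\cdot+1)$ under a fixed coordinate sum strictly increases with that sum. This contradicts the maximality of $A$, so $d\geq 0$, and consequently $P_i=\aa+d\ee_i$ from \eqref{def:Pi} is well-defined with $a_i\geq 0$.

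Finally, since $d\geq 0$ the family $K=K(r,n,\aa,d)$ is itself $r$-wise $s$-union (Lemma~\ref{lemma1}), so maximality of $A$ together with $A\subseteq K$ forces $A=K$; in particular every $P_i=\aa+d\ee_i\in\UU(\aa,d)\subset K=A$, so \eqref{assumption} holds and Theorem~\ref{mainthm} applies. Reading off its conclusion (equivalently, reading it directly from $A=K(r,n,\aa,d)$ and Lemma~\ref{lemma2}) gives $|A|\leq|K(r,n,\aa,d)|\leq\max_{0\leq d'\leq\lfloor\frac sr\rfloor}|K(r,n,\aa',d')|$ with $\aa'$ balanced, which is the Conjecture's bound for \emph{every} $r$-wise $s$-union family since $A$ was of maximum size. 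In the equality case all the displayed inequalities are tight, so any family attaining the bound is itself extremal and the uniqueness clause of Lemma~\ref{lemma2} forces $\aa$ to be a balanced partition; hence $A=K(r,n,\aa,d)$ has exactly the asserted form.
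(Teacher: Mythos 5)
Your proof is correct and follows essentially the same route as the paper's: the identical partition inequalities (one pair plus $r-1$ singletons) establish $A\subset L$, non-negativity of the $a_i$ is obtained the same way, and the conclusion rests on Lemmas~\ref{lemma1} and~\ref{lemma2}, with your detour through Theorem~\ref{mainthm} being a logically equivalent repackaging (the paper's own proof of the Corollary concludes directly from $A\subset L$ without invoking that theorem). Your explicit maximality argument for $d\geq 0$ is a small but genuine addition, since the paper leaves that point implicit in the definition \eqref{def:d}.
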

 
\begin{proof}
Let $n=r+1$ and let $A\subset\N^{r+1}$ be $r$-wise $s$-union with maximum size.
Define $\mm$ by \eqref{def:mi}.
Since $n-r=1$ we can define $d$ by \eqref{def:d}.
Then define $\aa$ by \eqref{def:ai}. 
We need to verify $a_i\geq 0$ for all $i$.
To this end we may assume that $m_1\geq m_2\geq\cdots\geq m_{r+1}$. 
Then $a_i\geq a_{r+1}=m_{r+1}-d$, so it suffices to show $m_{r+1}\geq d$.
Since $A$ is $r$-wise $s$-union it follows that $m_1+m_2+\cdots+m_r\leq s$.
This together with the definition of $d$ implies
$d=|\mm|-s\leq m_{r+1}$, as needed.
So we can properly define $P_i$ by \eqref{def:Pi}.

Next we check that $\xx\in A$ satisfies \eqref{L:eq1} and \eqref{L:eq2}.
By definition we have $x_i\leq m_i=a_i+d$, so we have \eqref{L:eq1}.
Since $A$ is $r$-wise $s$-union, we have 
\[
 (x_1+x_2)+m_3+\cdots+m_{r+1}\leq s,
\]
or equivalently, 
\[
 (x_1+x_2)+(a_3+d)+\cdots+(a_{r+1}+d)\leq s=|\aa|+rd.
\]
Rearranging we get $x_1+x_2\leq a_1+a_2+d$, and we get the other cases 
similarly, so we obtain \eqref{L:eq2}. Thus $A\subset L$ 
and the result follows from Lemmas~\ref{lemma1} and \ref{lemma2}.
\end{proof}

\section*{Acknowledgement}
The authors thank the anonymous referees of European journal of combinatorics 
for pointing out the error in Claim~1 of the earlier version and for many helpful suggestions
which improve the presentation of this paper.




\begin{thebibliography}{9}

\bibitem{Erdos}
P.~Erd\H os,
\newblock A problem on independent $r$-tuples, 
\newblock{\em Ann.\ Univ.\ Sci.\ Budapest.}, 8:93--95, 1965. 

\bibitem{F1}
P.~Frankl,
\newblock Families of finite sets satisfying a union condition,
\newblock {\em Discrete Math.}, 26:111--118, 1979.

\bibitem{F2}
P.~Frankl,
\newblock Multiply-intersecting families,
\newblock {\em J.\ Combin.\ Theory (B)} 53:195--234, 1991.

\bibitem{FT0}
P.~Frankl, N.~Tokushige,
\newblock On r-cross intersecting families of sets,
\newblock {\em Combinatorics, Probability \& Computing}, 20:749--752 2011.

\bibitem{FT}
P.~Frankl, N.~Tokushige,
\newblock Intersection problems in the $q$-ary cube, 
{\em J.\ Combin.\ Theory (A)}, 141:90--126, 2016. 

\bibitem{HLP}
G.~H.~Hardy, J.~E.~Littlewood, G.~P\'olya,
Inequalities,
\newblock {\em Cambridge University Press}, 1934, 1952.

\bibitem{Katona}
G.~O.~H.~Katona,
\newblock Intersection theorems for systems of finite sets,
\newblock {\em Acta Math. Acad. Sci. Hung.}, 15:329--337, 1964.

\bibitem{T1}
N.~Tokushige,
\newblock Multiply-intersecting families revisited, 
\newblock {\em J. Combin. Theory (B)} 97:929-948, 2007.

\bibitem{T2}
N.~Tokushige,
\newblock A multiply intersecting Erd\H os-Ko-Rado theorem 
--- The principal case,
\newblock {\em Discrete Mathematics}, 310:453--460, 2009.

\end{thebibliography}

\end{document}